\newtheorem{theorem}{Theorem}[section]
\newtheorem{lemma}[theorem]{Lemma}
\newtheorem{proposition}[theorem]{Proposition}
\theoremstyle{definition}
\newtheorem{remark}[theorem]{Remark}
\newtheorem{assumption}[theorem]{Assumption}
\newtheorem{question}[theorem]{Question}
\def\P{{\mathbb P}}
\def\cE{{\mathcal E}}
\def\cO{{\mathcal{O}}}
\def\cOperatorname#1{\mathop{\rm #1}\nolimits}
\def\det{\cOperatorname{det}}
\def\ME{{\cOperatorname{ME}}}
\newcommand{\cME}[1]{\cOverline{\ME}}
\begin{document}

\title{Positivity of the exterior power of the tangent bundles}

\author{Kiwamu Watanabe}
\date{\today}
\address{Department of Mathematics, Faculty of Science and Engineering, Chuo University.
1-13-27 Kasuga, Bunkyo-ku, Tokyo 112-8551, Japan}
\email{watanabe@math.chuo-u.ac.jp}
\thanks{The author is partially supported by JSPS KAKENHI Grant Number 21K03170 and the Sumitomo Foundation Grant Number 190170.}

\subjclass[2010]{14J40, 14J45, 14M22.}
\keywords{}

\begin{abstract} Let $X$ be a complex smooth projective variety such that the exterior power of the tangent bundle $\bigwedge^{r} T_X$ is nef for some $1\leq r<\dim X$. We prove that, up to an \'etale cover, $X$ is a Fano fiber space over an Abelian variety. This gives generalizations of the structure theorem of varieties with nef tangent bundle by Demailly, Peternell and Schneider \cite{DPS94} and that of varieties with nef $\bigwedge^{2} T_X$ by the author \cite{Wat21}. Our result also gives an answer to a question raised by Li, Ou and Yang \cite{LOY19} for varieties with strictly nef $\bigwedge^{r} T_X$ when $r < \dim X$. 
\end{abstract}

\maketitle

\section{Introduction} Positivity for vector bundles such as ampleness and nefness has left its mark on the study of algebraic geometry. Let $X$ be a complex smooth projective variety of dimension $n$; we focus on the positivity of the tangent bundle $T_X$, which reflects the global geometry of $X$.  As a generalization of the Hartshorne-Frankel conjecture solved by Mori \cite{Mori79} (see also \cite{SiuYau80} by Siu and Yau), Campana and Peternell \cite{CP91} studied the structure of smooth projective varieties with nef tangent bundle, paying special attention to $3$-folds. In higher dimensional case, Demailly, Peternell and Schneider obtained the following structure theorem:

\begin{theorem}[{\cite[Main Theorem]{DPS94}}]\label{them:DPS} If $T_X$ is nef, then there exists a finite \'etale cover $X'\to X$ such that $X'$ is a locally trivial fibration $\varphi: X' \to {\rm Alb}(X')$ whose fibers are a Fano variety. 
\end{theorem}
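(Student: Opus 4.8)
The plan is to analyze the Albanese morphism $\alpha\colon X\to A:=\mathrm{Alb}(X)$ and to prove that, after a finite \'etale base change, it is exactly the asserted locally trivial Fano fibration. First I would record the elementary consequence that $-K_X=\det T_X$ is nef, since the top exterior power of a nef bundle is nef; this places $X$ among the varieties with nef anticanonical class and will be used repeatedly on the fibers.

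The technical heart is to show that $\alpha$ is a smooth surjective morphism (a submersion). The idea is that nefness of $T_X$ rigidifies the holomorphic one-forms: dualizing the evaluation $H^0(X,\Omega^1_X)\otimes\mathcal{O}_X\to\Omega^1_X$ produces the differential $d\alpha\colon T_X\to\alpha^*T_A\cong\mathcal{O}_X^{\oplus q}$, and I would argue that nefness forbids this map from dropping rank --- equivalently, that no nonzero global one-form can vanish anywhere. On the locus where $d\alpha$ has maximal rank its kernel defines a foliation whose leaves are the fibers of $\alpha$, and the point is to promote this to a global submersion; surjectivity onto $A$ then follows because the image is a subtorus generating $A$. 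I expect this smoothness statement to be the main obstacle, since it forces one to use the positivity of the \emph{full} tangent bundle $T_X$ (not merely of $-K_X$) in order to control the degeneracy locus of the Albanese differential.

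Granting smoothness, the next step is to identify each fiber $F$ as a Fano manifold. From the conormal sequence $0\to T_F\to T_X|_F\to\mathcal{O}_F^{\oplus q}\to 0$, in which the quotient is trivial, a standard restriction argument shows that $T_F$ is again nef; moreover $q(F)=0$ because $\alpha$ is the Albanese map. Now if $-K_F$ were nef but not ample, the associated fibration would have a positive-dimensional base carrying a nef tangent bundle and numerically trivial canonical class, and such a base has positive irregularity (up to \'etale cover it is an Abelian variety), contradicting $q(F)=0$. Hence $-K_F$ is ample, i.e.\ $F$ is Fano.

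Finally I would establish local triviality and extract the \'etale cover. Since the fibers are Fano they are unobstructed and the family $\alpha\colon X\to A$ is isotrivial: because $T_A$ is trivial the relative Kodaira--Spencer data are constant, so all fibers are isomorphic and $\alpha$ is an analytic fiber bundle whose structure is governed by the monodromy of the family. Pulling back along a finite \'etale cover $A'\to A$ that trivializes the finite part of this monodromy yields an \'etale cover $X'\to X$ together with a genuinely locally trivial fibration $\varphi\colon X'\to A'=\mathrm{Alb}(X')$ with Fano fibers, as required. The remaining work is bookkeeping: verifying that $A'$ really is the Albanese of $X'$ and that the bundle is locally trivial in the desired sense rather than merely topologically.
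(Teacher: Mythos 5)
First, a point of comparison: the paper does not prove this statement at all --- it is quoted from Demailly--Peternell--Schneider \cite{DPS94} and used as a black box, so there is no in-paper proof to measure your attempt against. Your outline does follow the broad strategy of the original argument in \cite{DPS94} (smoothness of the Albanese map via non-vanishing of holomorphic one-forms, nefness of $T_F$, Fano-ness of the fibers, local triviality), but at each of the genuinely hard points you assert the conclusion rather than prove it, and two of the assertions are incorrect as stated.

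Concretely: (1) The claim that no nonzero holomorphic one-form on $X$ vanishes anywhere, so that $d\alpha$ has constant rank, is the technical core of \cite{DPS94}; ``nefness forbids the map from dropping rank'' is not an argument, since a morphism from a nef bundle to a trivial bundle can perfectly well degenerate along a nonempty locus. (2) You cannot conclude $q(F)=0$ merely because $\alpha$ is the Albanese map of $X$; one must first pass to a finite \'etale cover of maximal irregularity (and prove that the irregularity of \'etale covers is bounded, which again uses nefness of $T_X$). This is precisely where the \'etale cover in the statement comes from, whereas in your write-up the cover appears only at the very end for monodromy reasons, which is not where it is needed. (3) The deduction that $F$ is Fano fails as written: if $-K_F$ is nef but not ample there is no ``associated fibration'' --- nef anticanonical divisors are not known to be semiample --- so the promised contradiction with $q(F)=0$ never materializes. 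The actual argument splits into the case $c_1(F)^{\dim F}=0$, handled by a Chern-class/Riemann--Roch computation forcing $\chi(\mathcal{O}_F)=0$ and hence nonzero odd-degree forms on some cover, and the case $c_1(F)^{\dim F}>0$, where one must exclude $K_F$-trivial extremal rays by Mori theory. (4) Local triviality does not follow from ``$T_A$ is trivial, hence the relative Kodaira--Spencer data are constant'': the Kodaira--Spencer map $T_A\to R^1\alpha_{*}T_{X/A}$ has no reason to vanish just because its source is a trivial bundle. In short, your proposal is a plausible table of contents for the proof in \cite{DPS94}, but each chapter is missing its substance.
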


Some years ago, Cao and H\"oring extended Theorem~\ref{them:DPS} to a more general setting:

\begin{theorem}[{\cite[Theorem~1.3]{CH19}}]\label{them:CH19}  If the anticanonical divisor $-K_X$ is nef, then there exists a finite \'etale cover $X'\to X$ such that $X'\cong Y\times Z$ where $K_Y$ is trivial and $Z$ is a  locally trivial fibration $\varphi: Z \to {\rm Alb}(Z)$ with a rationally connected fiber. 
\end{theorem}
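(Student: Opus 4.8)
The plan is to control the geometry through the Albanese map $a\colon X \to A = {\rm Alb}(X)$ and then to split off a rationally connected part. The first and hardest step is to show that $a$ is a locally trivial fibration. Since $K_A \equiv 0$, nefness of $-K_X$ forces nefness of the relative anticanonical class $-K_{X/A}$, and by Qi Zhang's theorem $a$ is surjective. Upgrading this to a smooth, analytically locally trivial fibration is the main obstacle: I would invoke the positivity of the direct image sheaves $a_\ast(-mK_{X/A})$ together with the singular Hermitian metric and curvature techniques of Berndtsson, P\u{a}un and Cao. These force every fiber of $a$ to be smooth and the family to be isotrivial, locally over $A$ a product with a fixed fiber $F$. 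This positivity-of-direct-images input is precisely the deep analytic ingredient on which everything else rests.

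Granting this, I would next study the fiber $F = a^{-1}(t)$. By adjunction $-K_F$ is nef, and since every holomorphic $1$-form on $X$ is pulled back from $A$, a monodromy and dimension count (valid after the finite \'etale base change that makes the bundle have finite structure group) gives $q(F) = 0$. I would then split the relative tangent bundle. The relative maximal rationally connected fibration defines an algebraically integrable subfoliation $\mathcal{R} \subset T_{X/A}$ whose general leaves are the rationally connected MRC fibers; its leaf space maps to $A$ with fibers the MRC quotient $W$ of $F$, which is not uniruled, so by the $\mathrm{BDPP}$ criterion together with the residual nefness of $-K_W$ one gets $K_W \equiv 0$. Using the semistability forced by $-K_X$ nef (slope and foliation arguments in the spirit of Druel and Campana--P\u{a}un), I would produce a complementary integrable foliation $\mathcal{S}$ with $T_{X/A} = \mathcal{R} \oplus \mathcal{S}$, whose leaves are manifolds with numerically trivial canonical class.

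Finally I would integrate this splitting to a genuine product. The decomposition $F \cong R \times S$ into an MRC fiber and a trivial-canonical leaf is canonical, so the structure group of the locally trivial fibration $a$ preserves it; after a finite \'etale cover of the base killing the finite part of the monodromy, and a further isogeny splitting the torus as $A_Y \times A_Z$ compatibly with the two foliations, the bundle reassembles as a product $X' \cong Y \times Z$ with $K_Y \equiv 0$ and $Z \to {\rm Alb}(Z)$ locally trivial with rationally connected fiber. Besides Step 1, the delicate point here is showing that the fiberwise product is flat for parallel transport, so that the two foliations integrate to an honest global product rather than a merely twisted family; I would settle this with a holonomy argument exploiting the functoriality of both the MRC fibration and the canonical splitting of $F$.
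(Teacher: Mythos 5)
This statement is not proved in the paper at all: it is quoted verbatim from Cao--H\"oring \cite[Theorem~1.3]{CH19} and used as a black box, so there is no internal proof to compare yours against. Judged on its own terms, your outline does identify the correct architecture of the actual argument in the literature: (1) surjectivity of the Albanese map (Qi Zhang), upgraded to an (analytically) locally trivial fibration via positivity of direct image sheaves in the Berndtsson--P\u{a}un--Cao framework; (2) a splitting of the tangent sheaf adapted to the MRC foliation, using algebraic integrability and semistability results of Campana--P\u{a}un and Druel; (3) integration of the splitting into a product after a finite \'etale cover. That is a faithful roadmap of Cao's Albanese theorem plus the Cao--H\"oring decomposition.

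However, as a proof it has genuine gaps beyond merely deferring to deep external results. First, the claim that a monodromy and dimension count gives $q(F)=0$ for the Albanese fiber is false as stated: a bielliptic surface has nef (indeed numerically trivial) anticanonical class, its Albanese map is an elliptic fibration with elliptic fibers, so $q(F)=1$; the vanishing of $q$ only holds for the fiber of $Z\to{\rm Alb}(Z)$ \emph{after} the trivial-canonical factor $Y$ has already been split off, so you cannot use it as an input to produce that splitting. Second, the assertion that the MRC quotient $W$ inherits nefness of $-K_W$ (needed for your BDPP step concluding $K_W\equiv 0$) is itself a nontrivial theorem, not a formal consequence of adjunction; in \cite{CH19} the numerical triviality of the canonical class of the non-uniruled factor is an output of the direct-image positivity and foliation machinery, not an independent hypothesis one can assume. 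Finally, the ``holonomy argument'' that the two foliations integrate to an honest global product rather than a twisted family is precisely the hardest part of \cite{CH19} and cannot be waved through. Since the paper under review treats the statement purely as a citation, the appropriate resolution is to cite \cite{CH19} rather than to attempt a self-contained proof.
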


In general a fiber of $\varphi$ in Theorem~\ref{them:CH19} is not a Fano variety, because there exists a lot of rationally connected projective varieties with nef anticanoncial divisor which is not Fano (for instance, consider weak Fano varieties). The main result of this paper is a generalization of Theorem~\ref{them:DPS}:

\begin{theorem}\label{them:str} Let $X$ be a smooth projective variety of dimension $n$. Assume that $\bigwedge^{r} T_X$ is nef for some $1\leq r<n$. Then if we take a suitable finite \'etale cover $\tilde{X} \to X$, there exists a locally trivial fibration $\varphi: \tilde{X} \to A$ such that the fiber $F$ is a Fano variety and $A$ is an Abelian variety.
Moreover, if $\dim A\geq r-1$, then $T_{X}$ is nef; otherwise $\bigwedge^{r-\dim A}T_{\tilde{X}/A}$ is nef. 
\end{theorem}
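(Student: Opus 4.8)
The plan is to deduce the statement from the Cao--H\"oring structure theorem (Theorem~\ref{them:CH19}) and then to upgrade its conclusion using the full strength of the hypothesis that $\bigwedge^{r}T_X$ is nef, rather than merely $-K_X$. The starting observation is that nefness of $\bigwedge^{r}T_X$ forces $-K_X$ to be nef, since the determinant of a nef bundle is nef and $\det\bigwedge^{r}T_X=(\det T_X)^{\binom{n-1}{r-1}}$. Applying Theorem~\ref{them:CH19}, after a finite \'etale cover we may write $X'\cong Y\times Z$ with $K_Y\equiv 0$ and $Z\to A_Z:=\mathrm{Alb}(Z)$ a locally trivial fibration with rationally connected fibers.

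The next step is to show that the $K$-trivial factor $Y$ is Abelian up to \'etale cover. Since $T_{X'}=p_Y^{*}T_Y\oplus p_Z^{*}T_Z$, restricting $\bigwedge^{r}T_{X'}$ to a fiber $Y\times\{z\}$ exhibits $\bigwedge^{i}T_Y$ as a direct summand for $i=\min(r,\dim Y-1)$, and one checks $1\le i\le\dim Y-1$ using $r<n$. Hence $\bigwedge^{i}T_Y$ is nef, and as $\det T_Y\equiv\mathcal{O}_Y$ it has numerically trivial determinant, so it is numerically flat and all its Chern classes vanish. Because $c_2(\bigwedge^{i}T_Y)$ is a positive multiple of $c_2(Y)$ modulo $c_1(Y)^2=0$, we obtain $c_2(Y)\equiv 0$ (for $\dim Y\ge 2$; smaller $Y$ is already Abelian). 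By the Beauville--Bogomolov decomposition a $K$-trivial manifold with numerically trivial $c_2$ has neither a Calabi--Yau nor a hyperk\"ahler factor, so $Y$ is covered by a torus. After this further cover we obtain a locally trivial fibration $\varphi:\tilde X\to A$ over the Abelian variety $A=\mathrm{Alb}(\tilde X)$ whose fiber $F$ is rationally connected.

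The crux is to prove that $F$ is Fano. Local triviality of $\varphi$ furnishes a flat Ehresmann connection, i.e.\ a splitting $T_{\tilde X}\cong T_{\tilde X/A}\oplus\varphi^{*}T_A$ with $\varphi^{*}T_A\cong\mathcal{O}_{\tilde X}^{\oplus a}$, $a:=\dim A$. Decomposing $\bigwedge^{r}T_{\tilde X}\cong\bigoplus_{j}\bigwedge^{r-j}T_{\tilde X/A}\otimes\mathcal{O}^{\oplus\binom{a}{j}}$ and restricting to $F$ shows that $\bigwedge^{s}T_F$ is nef for $s=\max(1,r-a)$, with $s<\dim F$ since $r<n$. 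I would then rule out $-K_F$-trivial rational curves: if $f:\mathbb{P}^1\to F$ is the normalization of such a curve and $f^{*}T_F=\bigoplus\mathcal{O}(a_i)$ with $a_1\ge\cdots\ge a_{\dim F}$, then $\sum a_i=-K_F\cdot\ell=0$, while $a_1\ge 2$ because $df$ embeds $T_{\mathbb{P}^1}=\mathcal{O}(2)$; an elementary averaging inequality then forces the sum of the $s$ smallest $a_i$ to be strictly negative, so $\bigwedge^{s}f^{*}T_F$ acquires a negative summand, contradicting nefness of $\bigwedge^{s}T_F$. Thus no rational curve on $F$ is $-K_F$-trivial. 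I expect the passage from here to ampleness of $-K_F$ to be the main obstacle: nefness of $-K_F$ together with the absence of $-K_F$-trivial rational curves does not formally yield ampleness. To close the gap I would argue that $-K_F$ is big, using rational connectedness, hence semiample, and then observe that a positive-dimensional fiber of the associated contraction would be a $K$-trivial, non-uniruled subvariety carrying nonzero holomorphic forms, which is impossible inside the rationally connected $F$.

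Finally, the ``moreover'' statement follows formally from the same splitting $\bigwedge^{r}T_{\tilde X}\cong\bigoplus_{j}\bigwedge^{r-j}T_{\tilde X/A}\otimes\mathcal{O}^{\oplus\binom{a}{j}}$: nefness of the left-hand side gives nefness of every summand $\bigwedge^{r-j}T_{\tilde X/A}$ for $0\le j\le\min(r,a)$. If $\dim A\ge r-1$ one takes $j=r-1$ to conclude that $T_{\tilde X/A}$, and hence $T_{\tilde X}=T_{\tilde X/A}\oplus\mathcal{O}^{\oplus a}$, is nef; as nefness descends under the finite \'etale cover, $T_X$ is nef. If $\dim A\le r-2$ one instead takes $j=\dim A$ to conclude that $\bigwedge^{r-\dim A}T_{\tilde X/A}$ is nef.
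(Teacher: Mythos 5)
Your skeleton matches the paper's: reduce to the Cao--H\"oring decomposition, show the $K$-trivial factor $Y$ is an Abelian variety up to \'etale cover, prove the rationally connected fiber is Fano, and read off the ``moreover'' clause from the tangent sequence over $A$. Your treatment of $Y$ via numerical flatness, $c_2(Y)=0$ and Beauville--Bogomolov is a legitimate variant of the paper's appeal to Yasutake via Proposition~\ref{prop:fundamental:properties}~(ii), and your last paragraph is essentially Proposition~\ref{prop:fiber:target} (with the caveat that local triviality alone does not yield a holomorphic splitting $T_{\tilde X}\cong T_{\tilde X/A}\oplus\varphi^{*}T_A$; you need the locally \emph{constant} structure that Cao--H\"oring actually establish, or else the filtration argument the paper uses). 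The problem is the central step, exactly where you flagged an obstacle: your proposed repair does not work.

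First, bigness of $-K_F$ does \emph{not} follow from rational connectedness plus nefness of $-K_F$: a rational elliptic surface ($\P^2$ blown up in the nine base points of a cubic pencil) is rationally connected with $-K$ nef and $(-K)^2=0$. Bigness is the substantive content of Gachet's Lemma~3.1, which the paper imports through Proposition~\ref{prop:n-1:Fano} after upgrading $\bigwedge^{s}T_F$ nef to $\bigwedge^{\dim F-1}T_F$ nef via Theorem~\ref{them:Fuj}; it uses the positivity of the exterior power in an essential way and cannot be replaced by ``using rational connectedness.'' Second, even granting $-K_F$ nef, big and semiample, your contradiction --- that a positive-dimensional fiber of the anticanonical contraction would be non-uniruled, hence carry holomorphic forms, which is impossible inside a rationally connected variety --- is not a valid inference: non-uniruled varieties need not carry holomorphic forms, and subvarieties of rationally connected varieties certainly can carry them (a high-genus curve in $\P^2$ does). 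The correct finish, as in Gachet's Lemma~3.3, is that the fibers of the crepant contraction defined by $|-mK_F|$ are rationally chain connected (Hacon--McKernan, or Kawamata's length estimates), so a positive-dimensional fiber contains a rational curve $C$ with $-K_F\cdot C=0$; this contradicts the fact that every rational curve on $F$ satisfies $-K_F\cdot C>0$ (Lemma~\ref{lem:nonfree} for non-free curves; your averaging computation, or simply $a_1\ge 2$ with all $a_i\ge 0$, for free ones). So the theorem is true and your outline is close to the paper's, but the proof as written has a genuine hole at the Fano step.
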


This theorem reduces the study of smooth projective varieties with nef $\bigwedge^r T_X$ ($r<\dim X$) to that of Fano varieties. 
For $r=1$, Theorem~\ref{them:str} is nothing but Theorem~\ref{them:DPS}; for $r=2$ this was obtained in \cite[Theorem~1.5]{Wat21}. The proof of \cite[Theorem~1.5]{Wat21} involves the deformation theory of rational curves and some complicated arguments. On the other hand, in this short paper, we give a really simple proof of Theorem~\ref{them:str}. Our proof relies on two key ingredients; one is Theorem~\ref{them:DPS}; the other is a recent result of Gachet \cite{Gac22}. In \cite[Theorem~1.2]{Gac22}, she proved that for a smooth projective variety $X$ of dimension $n$ if $\bigwedge^{n-1} T_X$ is strictly nef, then $X$ is a Fano variety. Her proof works if we replace the assumption that $\bigwedge^{n-1} T_X$ is strictly nef by the assumption that $X$ is rationally connected and $\bigwedge^{n-1} T_X$ is nef. Moreover by using a result by Laytimi and Nahm \cite{LayNah05}, we see that if $\bigwedge^{r} T_X$ is nef for some $r<n$, then so is $\bigwedge^{n-1} T_X$. Thus we have the following:
\begin{proposition}\label{prop:RC:Fano} 
Let $X$ be a smooth projective variety of dimension $n$. Assume that $X$ is rationally connected and $\bigwedge^{r} T_X$ is nef for some $1\leq r<n$. Then $X$ is a Fano variety. 
\end{proposition}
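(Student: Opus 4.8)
The plan is to combine the two ingredients recalled just above the statement. First I would reduce to the boundary case $r=n-1$. One cannot simply propagate positivity upward by the elementary surjection $T_X\otimes\bigwedge^{r}T_X\to\bigwedge^{r+1}T_X$, because that argument would require $T_X$ itself to be nef, which is not among the hypotheses. Instead I would invoke \cite{LayNah05} directly: by their monotonicity, nefness of $\bigwedge^{r}T_X$ for a single $1\le r<n$ already forces $\bigwedge^{n-1}T_X$ to be nef. From this point on I may therefore assume that $\bigwedge^{n-1}T_X$ is nef and ignore the dependence on $r$.

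Next I would exploit the canonical isomorphism $\bigwedge^{n-1}T_X\cong\Omega^1_X\otimes\mathcal O_X(-K_X)$ coming from the perfect pairing $\bigwedge^{n-1}T_X\otimes T_X\to\det T_X$. Since the determinant of a nef bundle is nef and $\det\big(\bigwedge^{n-1}T_X\big)=\mathcal O_X(-(n-1)K_X)$, the hypothesis already yields that $-K_X$ is nef. The extra positivity is then read off on rational curves. For the normalization $\nu\colon\mathbb P^1\to C$ of a rational curve $C\subset X$, write $\nu^{*}T_X=\bigoplus_{i=1}^{n}\mathcal O(a_i)$ with $a_1\ge\cdots\ge a_n$ and $d=-K_X\cdot C=\sum_i a_i$. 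The differential gives a nonzero $T_{\mathbb P^1}=\mathcal O(2)\hookrightarrow\nu^{*}T_X$, so $a_1\ge 2$; on the other hand $\nu^{*}\bigwedge^{n-1}T_X=\bigoplus_i\mathcal O(d-a_i)$ is nef, which forces $d-a_1\ge 0$. Combining these, every rational curve $C\subset X$ satisfies $-K_X\cdot C\ge a_1\ge 2$.

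The remaining and hardest step is to upgrade ``$-K_X$ nef, and strictly positive on every rational curve'' to ``$-K_X$ ample'', and this is precisely where rational connectedness replaces the strict nefness used in \cite{Gac22}. By Kleiman's criterion it suffices to show that the $K_X$-trivial face $F=\overline{\mathrm{NE}}(X)\cap\{-K_X=0\}$ is zero. Since $-K_X$ is nef, the cone theorem gives $\overline{\mathrm{NE}}(X)=F+\sum_i\mathbb R_{\ge 0}[C_i]$ with the $C_i$ rational and $-K_X\cdot C_i\ge 2>0$, so $F$ contains no rational class. If $F\neq 0$ I would aim for a contradiction from rational connectedness: the associated $K_X$-trivial contraction $f\colon X\to Z$ would have positive-dimensional fibres, and, $X$ being rationally connected, these fibres should themselves carry rational curves, which are contracted by $f$ and hence of $(-K_X)$-degree $0$, contradicting the bound above. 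Making this contraction-theoretic step rigorous — the contractibility (semiampleness) of a $K_X$-trivial face on a rationally connected variety with $-K_X$ nef, together with the presence of rational curves in its fibres — is the technical heart of the matter, and is exactly the content I would import from \cite{Gac22}. Once $F=0$, Kleiman's criterion shows that $-K_X$ is ample, i.e.\ $X$ is Fano.
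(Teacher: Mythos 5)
Your proposal follows the paper's route exactly: Laytimi--Nahm monotonicity (Theorem~\ref{them:Fuj}) reduces to the case $r=n-1$, your degree bound $-K_X\cdot C\ge 2$ on rational curves is precisely Lemma~\ref{lem:nonfree} specialized to $r=n-1$, and the remaining ampleness step is deferred to Gachet's argument, which is exactly what the paper's Proposition~\ref{prop:n-1:Fano} does. One caution about how you frame that deferred step: the actual input from \cite{Gac22} is the \emph{bigness} of $-K_X$ (her Lemma~3.1, where rational connectedness enters through a very free curve together with the nefness of $\bigwedge^{n-1}T_X\cong\Omega^1_X(-K_X)$), and only once $-K_X$ is nef and big does the base-point-free theorem produce the crepant contraction of the $K_X$-trivial face whose fibers you then probe for rational curves -- so what you should import is the bigness statement, not ``contractibility of a $K_X$-trivial face when $-K_X$ is merely nef,'' which as stated is an abundance-type assertion that is not available.
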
 
Remark that, combining with \cite[Theorem~1.2]{LOY19}, Proposition~\ref{prop:RC:Fano} gives an affirmative answer to the following question by Li, Ou and Yang when $r < \dim X$: 
\begin{question}[{\cite[Remark~5.3]{LOY19}, \cite[Conjecture~4.9]{LOY21}, \cite[Question in Section 1]{Gac22}}]\label{ques:Fano} Assume that $\bigwedge^{r} T_X$ is strictly nef for some $1\leq r\leq n$. Then is $X$ a Fano variety?
\end{question}
Finally, Theorem~\ref{them:str} follows from Theorem~\ref{them:CH19}, Proposition~\ref{prop:RC:Fano} and standard arguments.

\section{Preliminaries}

\subsection{Notation and Conventions}\label{subsec:NC} We will use the basic notation and definitions in \cite{Har}, \cite{Kb}, \cite{L1}, \cite{L2} and  \cite{KM}. Along this paper, we work over the complex number field.\begin{itemize}
\item A {\it curve} means a projective variety of dimension one. 
\item Let $X$ be a smooth projective variety. A line bundle $L$ on $X$ is said to be {\it strictly nef} (resp. {\it nef}) if the intersection number $L\cdot C$ is positive (resp. non-negative) for any curve $C\subset X$. In general, we say that a vector bundle $\cE$ is {\it strictly nef} (resp. {\it nef}) if the tautological line bundle $\cO_{\P(\cE)}(1)$ is strictly nef (resp. {\it nef}) on $\P(\cE)$. 
\item For a non-constant morphism $f: \P^1 \to X$ from a projective line $\P^1$ to a smooth projective variety $X$, $f$ is said to be {\it free} if $f^{\ast}T_X$ is nef.
\end{itemize}



Throughout this section, we always assume the following:

 \begin{assumption}\label{ass:r:nef} \rm Assume $X$ is a smooth projective variety of dimension $n$ such that the exterior power $\bigwedge^r T_X$ is nef for some $1\leq r<n$.\end{assumption}

\begin{proposition}\label{prop:fundamental:properties} The following hold:
\begin{enumerate} 
\item The anticanonical divisor $-K_X$ is nef.
\item If the Kodaira dimension $\kappa(X)=0$, then there exists a finite \'etale cover $f: \tilde{X} \to X$ such that $\tilde{X} $ is an Abelian variety.
\end{enumerate}
\end{proposition}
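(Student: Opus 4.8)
The plan is to treat the two statements separately, using throughout the relation between the Chern data of $\bigwedge^r T_X$ and of $T_X$ furnished by the splitting principle. Write $a_1,\dots,a_n$ for the Chern roots of $T_X$; then the Chern roots of $\bigwedge^r T_X$ are the sums $a_{i_1}+\cdots+a_{i_r}$ over all $r$-element subsets $\{i_1,\dots,i_r\}\subseteq\{1,\dots,n\}$, and each index occurs in exactly $\binom{n-1}{r-1}$ of them. For (i) I would simply sum the roots: this gives $c_1(\bigwedge^r T_X)=\binom{n-1}{r-1}c_1(T_X)$, that is $\det\bigwedge^r T_X\cong(-K_X)^{\otimes\binom{n-1}{r-1}}$. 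The top exterior power of a nef bundle is nef, so $\binom{n-1}{r-1}(-K_X)$ is nef, and since $\binom{n-1}{r-1}\ge 1$ this forces $-K_X$ to be nef.

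For (ii) I would first pin down the canonical class. As $\kappa(X)=0$, some multiple $mK_X$ ($m>0$) is linearly equivalent to an effective divisor $D$, so $K_X$ is pseudoeffective; combined with $-K_X$ nef from (i), the standard fact that a pseudoeffective class with nef negative is numerically trivial gives $K_X\equiv 0$. Then $D\equiv 0$, and an effective numerically trivial divisor vanishes, so $mK_X\sim 0$ and $K_X$ is torsion. In particular $\det\bigwedge^r T_X\equiv 0$, so $\bigwedge^r T_X$ is nef with numerically trivial determinant, i.e. it is numerically flat.

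The crux is to transport this flatness back to $T_X$, which I would carry out at the level of second Chern classes. By the structure theorem for numerically flat bundles of Demailly, Peternell and Schneider \cite{DPS94}, $\bigwedge^r T_X$ admits a filtration with Hermitian-flat graded pieces, whence all of its real Chern classes vanish; in particular $c_2(\bigwedge^r T_X)=0$. On the other hand the same root computation, now using $c_1(T_X)=0$, yields $c_2(\bigwedge^r T_X)=\binom{n-2}{r-1}c_2(T_X)$, and the coefficient $\binom{n-2}{r-1}$ is nonzero precisely because $1\le r\le n-1$. Hence $c_2(T_X)=0$ in $H^4(X,\R)$. After a finite étale cover $\tilde X\to X$ trivializing the torsion class $K_X$, we have $K_{\tilde X}\cong\cO_{\tilde X}$ and still $c_2(T_{\tilde X})=0$; a Ricci-flat Kähler--Einstein metric (Yau) then satisfies $\int_{\tilde X}c_2\wedge\omega^{\,n-2}\ge 0$ with equality only in the flat case, so $c_2=0$ forces the metric to be flat. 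Thus $\tilde X$ is a complex torus, hence an Abelian variety since it is projective, which is (ii).

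The main obstacle is exactly this passage from $\bigwedge^r T_X$ to $T_X$: nefness of an exterior power does not in general descend to the bundle itself, so the argument must route through the numerically flat structure theorem and the vanishing of $c_2$, the decisive arithmetic point being that $\binom{n-2}{r-1}\neq 0$ throughout the range $1\le r<n$. An alternative to the final analytic step would be to invoke Theorem~\ref{them:CH19}: $-K_X$ nef yields an étale splitting into $Y\times Z$ with $K_Y$ trivial and $Z$ fibered over an Abelian variety with rationally connected fibers, and $\kappa(X)=0$ forces those fibers to be points; one would then still need the $c_2$-vanishing to see that the $K$-trivial factor is an étale quotient of an Abelian variety.
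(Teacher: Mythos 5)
Your proof is correct. Part (i) is exactly the paper's argument: the paper proves (i) by the single identity $\det\bigl(\bigwedge^r T_X\bigr)\cong\cO_X\bigl(\binom{n-1}{r-1}(-K_X)\bigr)$, which is your Chern-root computation. For part (ii) the paper gives no argument at all --- it simply cites Yasutake (Theorem~1.1 of \cite{Yas14}) and Campana--Peternell (\cite{CP92}, Proposition~1.2) --- whereas you supply a complete proof: $K_X$ torsion from pseudoeffectivity plus nefness of $-K_X$, numerical flatness of $\bigwedge^r T_X$, vanishing of its real Chern classes via the Demailly--Peternell--Schneider filtration, the splitting-principle identity $c_2(\bigwedge^r T_X)=\binom{n-2}{r-1}c_2(T_X)$ when $c_1=0$ (which I checked: $\binom{n-1}{r-1}-\binom{n-2}{r-2}=\binom{n-2}{r-1}$, nonzero precisely for $r\le n-1$), and finally Yau's theorem to force flatness. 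This is essentially the argument underlying the cited references, so you have unpacked the citation rather than found a new route; the benefit is a self-contained proof, at the cost of invoking the analytic machinery directly. One small imprecision: a compact flat K\"ahler manifold is, by Bieberbach, only a \emph{finite \'etale quotient} of a complex torus, so the cover $\tilde X$ trivializing $K_X$ need not itself be a torus --- you must pass to one further finite \'etale cover, which is harmless since a composition of finite \'etale covers is finite \'etale and the statement only asserts existence of some such cover.
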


\begin{proof} The first part follows from $\det \left( \bigwedge^r T_X\right) \cong \cO_X\left( \binom{n-1}{r-1} \left( -K_X \right)\right)$. The second part follows from \cite[Theorem~1.1]{Yas14} (see also \cite[Proposition~1.2]{CP92}). 
\end{proof}

\begin{lemma}[{\cite[Lemma~1.3]{CP92}, \cite[Lemma~2.9]{Yas12}}]\label{lem:nonfree} Let $f: \P^1 \to X$ be a non-free rational curve, that is, $f^{\ast}T_X$ is not nef. Then we have $-K_X\cdot f_{\ast}(\P^1) \geq n-r+1$.
\end{lemma}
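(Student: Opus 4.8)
The plan is to pull everything back to $\P^1$ and argue with the Grothendieck splitting type of $f^{\ast}T_X$. First I would write $f^{\ast}T_X\cong\bigoplus_{i=1}^{n}\cO_{\P^1}(a_i)$ with $a_1\geq a_2\geq\cdots\geq a_n$, and record the basic constraints on the $a_i$. Since $\det T_X\cong\cO_X(-K_X)$, the total degree computes the intersection number, $\sum_{i=1}^{n}a_i=\deg f^{\ast}T_X=-K_X\cdot f_{\ast}(\P^1)$, so it suffices to bound this sum below by $n-r+1$. The differential $df\colon T_{\P^1}=\cO_{\P^1}(2)\to f^{\ast}T_X$ is nonzero because $f$ is non-constant, so its saturated image is a sub-line-bundle of degree $\geq 2$, forcing $a_1\geq 2$. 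The hypothesis that $f$ is non-free, i.e. that $f^{\ast}T_X$ is not nef, forces $a_n\leq -1$.

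The crucial input is the nefness of $\bigwedge^{r}T_X$. Pulling back, $f^{\ast}\bigwedge^{r}T_X\cong\bigwedge^{r}f^{\ast}T_X\cong\bigoplus_{i_1<\cdots<i_r}\cO_{\P^1}(a_{i_1}+\cdots+a_{i_r})$ is nef on $\P^1$; since a direct sum of line bundles on $\P^1$ is nef exactly when each summand has non-negative degree, every sum of $r$ of the $a_i$ is $\geq 0$. Applied to the $r$ smallest slopes this gives $\sum_{i=n-r+1}^{n}a_i\geq 0$. I expect the main obstacle to be extracting enough positivity from this single inequality, and the point I would isolate is that it already forces $a_{n-r+1}\geq 1$. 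Indeed, if $a_{n-r+1}\leq 0$ then by monotonicity all of $a_{n-r+1},\dots,a_n$ are $\leq 0$, and since $a_n\leq -1$ their sum would be strictly negative, contradicting the displayed inequality. This is the step where the two positivity hypotheses genuinely interact; everything else is bookkeeping.

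Finally I would assemble the estimate. By monotonicity $a_1\geq 2$ and $a_2,\dots,a_{n-r}\geq a_{n-r+1}\geq 1$, while $\sum_{i=n-r+1}^{n}a_i\geq 0$, so splitting the total degree into the first $n-r$ slopes and the last $r$ slopes gives
\[
-K_X\cdot f_{\ast}(\P^1)=\sum_{i=1}^{n-r}a_i+\sum_{i=n-r+1}^{n}a_i\geq\bigl(2+(n-r-1)\bigr)+0=n-r+1,
\]
which is the claim. The only care needed is that $n-r\geq 1$ (guaranteed by $r<n$), so the first block is nonempty, and in the degenerate case $n-r=1$ the first block is just $a_1\geq 2=n-r+1$, so the argument still closes.
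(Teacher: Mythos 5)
Your proof is correct and follows essentially the same route as the paper: Grothendieck splitting, the inequality $a_{n-r+1}+\cdots+a_n\geq 0$ from nefness of $\bigwedge^r f^{\ast}T_X$, deducing $a_{n-r+1}\geq 1$ from $a_n\leq -1$, and the same final bookkeeping. Your contrapositive derivation of $a_{n-r+1}\geq 1$ is a minor (and arguably cleaner) variant of the paper's chain of inequalities $(r-1)a_{n-r+1}\geq -a_n\geq 1$, but the argument is the same in substance.
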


\begin{proof} Assume that the splitting type of $f^{\ast}T_X$ is $(a_1, a_2, \ldots, a_n)$, that is,  
$$
f^{\ast}T_X \cong \bigoplus_{i=1}^n \cO_{\P^1}(a_i) \,\,\,\,(a_1\geq a_2 \geq \ldots \geq a_n,~a_1\geq 2).
$$ 
The $r$-th exterior power 
$$\bigwedge^r f^{\ast}T_X \cong \bigoplus_{1\leq i_1<i_2<\ldots<i_r\leq n}^n \cO_{\P^1}(a_{i_1}+a_{i_2}+\ldots+a_{i_r})$$ is nef; this yields 
$$
a_{{n-r+1}}+a_{{n-r+2}}+\ldots+a_{n}\geq 0.
$$ Since $f$ is not free, $a_n$ is negative. These imply that 
$$
(r-1)a_{n-r+1}\geq a_{n-r+1}+a_{n-r+2}+\ldots +a_{n-1}\geq -a_n \geq 1.$$ 
Thus $a_{n-r+1}$ is positive. As a consequence, we have the inequality 
$$
-K_X\cdot f_{\ast}(\P^1)= a_1+(a_2+ \ldots + a_{n-r})+(a_{n-r+1}+\ldots + a_n) \geq 2+(n-r-1)+0=n-r+1.
$$ 
\end{proof}

\begin{proposition}[{\cite[Proposition~3.3]{Wat20}}]\label{prop:fiber:target} 
Let $\varphi: X \to A$ be a smooth morphism onto an Abelian variety with irreducible fibers. Then the following hold:
\begin{enumerate}
\item If $\dim A\geq r-1$, then $T_X$ is nef.
\item If $\dim A< r-1$, then $\bigwedge^{r-\dim A}T_{X/A}$ is nef.
\end{enumerate}
\end{proposition}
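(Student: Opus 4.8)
The plan is to exploit the relative tangent sequence of $\varphi$ together with the triviality of the tangent bundle of the base. Writing $d=\dim A$, since $A$ is an Abelian variety we have $\varphi^{\ast}T_A\cong\cO_X^{\oplus d}$, so the relative tangent sequence reads
\[
0\to T_{X/A}\to T_X\to \cO_X^{\oplus d}\to 0 .
\]
The single computational input I would isolate is the following: for any $s$ with $d\le s\le n$ there is a natural surjection $\bigwedge^{s}T_X\twoheadrightarrow\bigwedge^{s-d}T_{X/A}$. This comes from the standard filtration of $\bigwedge^{s}T_X$ attached to the above extension, whose graded pieces are $\bigwedge^{p}T_{X/A}\otimes\bigwedge^{s-p}\cO_X^{\oplus d}\cong\big(\bigwedge^{p}T_{X/A}\big)^{\oplus\binom{d}{s-p}}$. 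The graded pieces with $p<s-d$ vanish (there $\bigwedge^{s-p}\cO_X^{\oplus d}=0$), so the filtration starts in degree $p=s-d$; hence $\bigwedge^{s}T_X$ admits the graded quotient $\bigwedge^{s-d}T_{X/A}\otimes\bigwedge^{d}\cO_X^{\oplus d}\cong\bigwedge^{s-d}T_{X/A}$ as a genuine quotient bundle. Since a quotient of a nef bundle is nef, nefness of $\bigwedge^{s}T_X$ will force nefness of $\bigwedge^{s-d}T_{X/A}$.

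For part (2), where $d<r-1$, I would apply this with $s=r$: as $d\le r\le n$ and $\bigwedge^{r}T_X$ is nef by Assumption~\ref{ass:r:nef}, the surjection $\bigwedge^{r}T_X\twoheadrightarrow\bigwedge^{r-d}T_{X/A}$ gives that $\bigwedge^{r-d}T_{X/A}$ is nef, which is exactly the assertion.

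For part (1), where $d\ge r-1$, I would instead take $s=d+1$, so that the surjection becomes $\bigwedge^{d+1}T_X\twoheadrightarrow\bigwedge^{1}T_{X/A}=T_{X/A}$. Thus it suffices to know that $\bigwedge^{d+1}T_X$ is nef and then to promote this to $T_X$. The promotion is automatic: the relative tangent sequence exhibits $T_X$ as an extension of the trivial (hence nef) bundle $\cO_X^{\oplus d}$ by $T_{X/A}$, and an extension of nef bundles is nef, so once $T_{X/A}$ is nef, $T_X$ is nef. (If $d=n$ the fibers are points, $\varphi$ is \'etale and $X$ is Abelian, so $T_X$ is trivial and there is nothing to prove; thus I may assume $d\le n-1$, whence $d+1\le n$.)

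The main obstacle is establishing that the intermediate exterior power $\bigwedge^{d+1}T_X$ is nef when $d\ge r$, since for such $d$ one has $d+1>r$ and the hypothesis only gives nefness of $\bigwedge^{r}T_X$. Here I would invoke the monotonicity of nefness of exterior powers supplied by Laytimi--Nahm \cite{LayNah05} (the same input used before Proposition~\ref{prop:RC:Fano}): nefness of $\bigwedge^{r}T_X$ implies nefness of $\bigwedge^{s}T_X$ for all $r\le s\le n-1$, while the top case $s=n$ is covered by Proposition~\ref{prop:fundamental:properties}(1), since $\bigwedge^{n}T_X=\det T_X\cong\cO_X(-K_X)$. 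Hence $\bigwedge^{s}T_X$ is nef for every $r\le s\le n$, and in particular for $s=d+1$, completing part (1). The remaining points are routine: the exactness and triviality in the relative tangent sequence (using that $\varphi$ is smooth onto an Abelian variety), the identification of the filtration quotient, and the two standard facts that quotients and extensions of nef bundles are nef.
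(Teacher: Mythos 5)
Your argument is correct, and part (ii) coincides with the paper's proof verbatim: both exploit that the graded pieces $\bigwedge^{p}T_{X/A}\otimes\bigwedge^{r-p}\varphi^{\ast}T_A$ of the filtration vanish for $p<r-\dim A$, producing the surjection $\bigwedge^{r}T_X\twoheadrightarrow\bigwedge^{r-\dim A}T_{X/A}$. For part (i), however, you take a genuinely different route. The paper stays with $s=r$ throughout: it observes that $E^1$ (the kernel of $\bigwedge^{r}T_X\to\bigwedge^{r}\varphi^{\ast}T_A$) is nef because it is a subbundle of a nef bundle with \emph{trivial} quotient (this is \cite[Proposition~1.2~(8)]{CP91}), and then reads off $T_{X/A}\otimes\bigwedge^{r-1}\varphi^{\ast}T_A$ as a quotient of $E^1$. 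You instead shift the exterior power up to $s=\dim A+1$, where the filtration collapses and $T_{X/A}$ becomes an honest quotient of $\bigwedge^{\dim A+1}T_X$; the price is that you must know $\bigwedge^{\dim A+1}T_X$ is nef when $\dim A+1>r$, which you correctly obtain from the Laytimi--Nahm monotonicity (Theorem~\ref{them:Fuj}), already available in the paper. Your version has the aesthetic advantage of treating (i) and (ii) as two instances of one surjection lemma and of avoiding the ``kernel of a surjection onto a trivial bundle is nef'' input; the paper's version is self-contained at the level of this proposition, using only the nefness of $\bigwedge^{r}T_X$ itself together with elementary permanence properties from \cite{CP91}. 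Your boundary checks (the case $\dim A=r-1$, where $\bigwedge{}^{\dim A+1}T_X=\bigwedge^{r}T_X$ needs no monotonicity, and the degenerate case $\dim A=n$) are handled correctly, and the final step---$T_X$ nef as an extension of the trivial bundle $\cO_X^{\oplus\dim A}$ by the nef bundle $T_{X/A}$---matches the paper's conclusion from the relative tangent sequence.
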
 

\begin{proof} We have an exact sequence
\begin{align*}
0 \to T_{X/A} \to T_X \to \varphi^{\ast} T_A\to 0.  \tag{1}
\end{align*}
By \cite[Chapter~II, Exercise~5.16 (d)]{Har}, we have a filtration of $\bigwedge^r T_{X}$:
$$
\bigwedge^r T_{X}=E^0\supset E^1\supset E^2\supset  \ldots \supset E^{r+1}=0 
$$ such that 
$$
E^p/E^{p+1}\cong \left(\bigwedge^pT_{X/A}\right)\bigotimes \left(\bigwedge^{r-p} \varphi^{\ast}T_A\right)
$$ for any $p$. In particular, we have the following exact sequences:
\begin{align*}
0 \to E^1 \to \bigwedge^r T_{X} \to \bigwedge^{r} \varphi^{\ast}T_A\to 0 \tag{2}
\end{align*}
\begin{align*}
0 \to E^2 \to E^1 \to T_{X/A}\bigotimes \left(\bigwedge^{r-1}\varphi^{\ast}T_A\right)\to 0 \tag{3}
\end{align*}
To prove $\rm (i)$, assume $\dim A\geq r-1$. Remark that $T_A\cong \cO_A^{\oplus \dim A}$. We claim that $E^1$ is nef. If $\dim A\geq r$, then it follows from the sequence (2) and \cite[Proposition 1.2~(8)]{CP91} that $E^1$ is nef. If $\dim A=r-1$, then the sequence (2) yields $E^1\cong \bigwedge^r T_{X}$; this implies that $E^1$ is nef. By the sequence (3), $T_{X/A}\bigotimes \left(\bigwedge^{r-1}\varphi^{\ast}T_A\right)$ is nef. Since $\bigwedge^{r-1}\left(\varphi^{\ast}T_A\right)$ is trivial bundle, we conclude that the relative tangent bundle $T_{X/A}$ is nef. Finally our assertion follows from the sequence (1).

To prove $\rm (ii)$, assume $\dim A< r-1$. Since $\bigwedge^p \varphi^{\ast}T_A=0$ for any $p>\dim A$, we have 
$$
\bigwedge^r T_X=E^0=E^1=\cdots=E^{r-\dim A}.
$$ Thus we have a surjection $\bigwedge^r T_X=E^{r-\dim A}\to \bigwedge^{r-\dim A}T_{X/A}$; this implies that $\bigwedge^{r-\dim A}T_{X/A}$ is nef.
\end{proof}

\section{Proof of the Main Theorem}

The following is due to Gachet:

\begin{proposition}[{\cite[Theorem~1.2]{Gac22}}]\label{prop:n-1:Fano} 
Let $X$ be a smooth projective variety of dimension $n$. Assume that $X$ is rationally connected and $ \bigwedge^{n-1} T_X$ is nef. Then $-K_X$ is ample, that is, $X$ is a Fano variety. 
\end{proposition}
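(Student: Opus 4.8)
The plan is to prove the equivalent assertion that $-K_X$ is ample; by Proposition~\ref{prop:fundamental:properties}~(i) it is already nef, so only strict positivity is at stake. The first step is to extract the one numerical consequence of the hypothesis that I will use. Since $\bigwedge^{n-1}T_X\cong \Omega^1_X\otimes\cO_X(-K_X)$, pulling back by the normalization $f\colon\P^1\to X$ of a rational curve $C$ and unwinding the condition $\mu_{\min}\!\left(f^{\ast}\bigwedge^{n-1}T_X\right)\ge 0$ gives $\mu_{\max}(f^{\ast}T_X)\le -K_X\cdot C$; as $T_{\P^1}=\cO_{\P^1}(2)$ always injects into $f^{\ast}T_X$, we obtain $-K_X\cdot C\ge\mu_{\max}(f^{\ast}T_X)\ge 2$ for every rational curve (this is precisely Lemma~\ref{lem:nonfree} with $r=n-1$ in the non-free case, together with the trivial bound in the free case). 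In particular, no rational curve is $-K_X$-trivial. The whole proof is then an attempt to collide this fact with rational connectedness.

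Next I would run the Kleiman/cone-theoretic obstruction. As $X$ is rationally connected it is uniruled, so $K_X$ is not pseudoeffective; together with nefness of $-K_X$, the Cone and Contraction Theorems show that ampleness of $-K_X$ can fail only through a nonzero $K_X$-trivial extremal face $F=\overline{\NE}(X)\cap K_X^{\perp}$. The aim is to manufacture a rational curve with class in $F$, for it would satisfy $-K_X\cdot C=0$ and contradict the previous paragraph. Concretely, granting that $-K_X$ is big, the Base-Point-Free Theorem makes $-K_X$ semiample, yielding a birational morphism $\varphi\colon X\to X'$ with $-K_X\sim_{\Q}\varphi^{\ast}A$ for some ample $A$; the contracted curves are exactly the $-K_X$-trivial ones, and every positive-dimensional fibre of a birational contraction from a smooth variety is covered by rational curves. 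Such a rational curve is contracted, hence $-K_X$-trivial, giving the contradiction. Thus $\varphi$ is finite, $-K_X$ is ample, and $X$ is Fano.

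The crux, and the point where rational connectedness and the \emph{full} nefness of $\bigwedge^{n-1}T_X$ (not merely its nefness along free curves, where the condition is vacuous) are genuinely needed, is the bigness of $-K_X$. It cannot follow from rational connectedness and nefness of $-K_X$ alone: a rational elliptic surface has $-K_X$ equal to a nef, non-big fibre class. What rescues us is exactly the degree bound above — that surface is ruled out because its singular fibres contain rational curves $C$ with $-K_X\cdot C=0$. I would therefore aim to bypass bigness as an input by showing directly that a nonzero face $F$ must already contain a rational class: on a rationally connected variety $\overline{\NE}(X)$ is generated by classes of rational curves (a nef, $K_X$-trivial effective limit of $1$-cycles is numerically carried by rational curves, as one sees by degenerating a very free connecting curve), so $F\neq 0$ would immediately yield a $-K_X$-trivial rational curve and hence the contradiction, with bigness following a posteriori. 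Making this generation statement precise — equivalently, contracting the $K_X$-trivial face when bigness is not yet in hand — is the delicate step; I expect to secure it through a relative Minimal Model Program, or by first invoking Cao--H\"oring (Theorem~\ref{them:CH19}), whose splitting collapses here because rationally connected manifolds are simply connected and carry no nonzero holomorphic forms, thereby reducing $X$ to its own rationally connected, $-K_X$-nef fibre with trivial Albanese.
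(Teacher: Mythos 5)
Your first two paragraphs are sound and in fact reproduce the skeleton of the paper's own argument, which follows \cite[Lemmas~3.1 and 3.3]{Gac22}: the bound $-K_X\cdot C\ge 2$ for every rational curve is exactly Lemma~\ref{lem:nonfree} with $r=n-1$ combined with the trivial estimate on free curves, and the implication ``$-K_X$ nef and big with no $K_X$-trivial rational curve $\Rightarrow$ $-K_X$ ample,'' via the Base-Point-Free Theorem and the fact that positive-dimensional fibres of a birational morphism from a smooth variety are covered by rational curves, is precisely how the paper concludes. The problem is that the step you yourself single out as the crux --- bigness of $-K_X$ --- is never established, and none of the three escape routes you sketch closes the gap. (a) The claim that on a rationally connected variety the closed cone $\overline{\NE}(X)$ is generated by classes of rational curves is not a theorem; it is open in general. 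Degenerating a very free connecting curve expresses the class of \emph{that} curve as a sum of rational classes, which says nothing about an arbitrary nonzero class of the face $F=\overline{\NE}(X)\cap K_X^{\perp}$ --- a class that need not even be effective --- and the Cone Theorem controls only the $K_X$-negative part of the cone, not $K_X^{\perp}$. (b) Contracting $F$ by the Base-Point-Free Theorem or a relative MMP applied to $D=-K_X$ requires $D-K_X=-2K_X$ to be nef \emph{and big}, i.e.\ exactly the bigness you are trying to avoid; the argument is circular. (c) Theorem~\ref{them:CH19} is vacuous here, as you half-concede: for a simply connected rationally connected $X$ the decomposition returns $Y=\{\mathrm{pt}\}$, $A=\mathrm{Alb}(Z)=0$ and $Z=X$, so nothing is gained.

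What is missing is the actual content of \cite[Lemma~3.1]{Gac22} adapted to the nef setting: an argument that uses the nefness of the full bundle $\bigwedge^{n-1}T_X\cong\Omega^1_X(-K_X)$ --- and not merely its restriction to rational curves, where, as you correctly observe, the condition is essentially vacuous on free curves --- to force $(-K_X)^n>0$. Your rational elliptic surface example shows that rational connectedness plus nefness of $-K_X$ alone cannot yield bigness, so some genuinely higher-dimensional positivity of the exterior power (restriction to subvarieties, Chern-class inequalities for nef bundles, or the like) must enter, and your proposal never brings it to bear. As written, the proof remains conditional on the undischarged hypothesis ``granting that $-K_X$ is big.''
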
 

\begin{proof} This follows from the same argument as in \cite[Lemma~3.1, Lemma~3.3]{Gac22}. Actually the proof works if we replace the assumption that $\bigwedge^{n-1} T_X$ is strictly nef by the assumption that $X$ is rationally connected and $\bigwedge^{n-1} T_X$ is nef; this yields that $-K_X$ is nef and big. Then we may conclude that $-K_X$ is ample by the same argument as in \cite[Lemma~3.3]{Gac22} and Lemma~\ref{lem:nonfree}.
\end{proof}

\begin{remark}\label{rem:Cecile} Although Proposition~\ref{prop:n-1:Fano} is not written explicitly in \cite{Gac22}, Gachet introduced this statement holds at Algebraic Geometry seminar of the University of Tokyo (see Acknowledgements below).
\end{remark}

\begin{theorem}[{\cite[Theorem~3.3]{LayNah05}, \cite{Fuj22}}]\label{them:Fuj} Let $X$ be a smooth projective variety of dimension $n$.  For a vector bundle $E$ of rank $r$, assume that its exterior power $\bigwedge^m E$ is nef for some positive integer $m$. Then the vector bundle $\bigwedge^{m+k} E$ is also nef for any $0\leq k\leq n-m$. 
\end{theorem}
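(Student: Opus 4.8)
The plan is to reduce to the elementary case $k=1$ and then induct on $k$. Since $\bigwedge^{m+k}E$ vanishes (and is therefore trivially nef) as soon as $m+k$ exceeds $r=\rk E$, only finitely many steps are at stake, and the bound $k\le n-m$ is harmless: for $m+k>r$ there is nothing to prove, so the real content is exactly the range $m+k\le r$. It thus suffices to prove the single implication: \emph{if $\bigwedge^m E$ is nef, then $\bigwedge^{m+1}E$ is nef.}

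First I would pass to curves. A vector bundle $\cF$ on $X$ is nef if and only if $\gamma^{\ast}\cF$ is nef on $C$ for every morphism $\gamma:C\to X$ from a smooth projective curve $C$ (test $\cO_{\P(\cF)}(1)$ on curves in $\P(\cF)$ and push them down to $X$). Because exterior powers commute with pullback, $\gamma^{\ast}\bigl(\bigwedge^m E\bigr)\cong\bigwedge^m(\gamma^{\ast}E)$ is nef for every such $\gamma$ by hypothesis, and to conclude that $\bigwedge^{m+1}E$ is nef on $X$ it is enough to show $\bigwedge^{m+1}(\gamma^{\ast}E)$ is nef on $C$. Thus the theorem reduces to the following statement on a smooth projective curve $C$: if $W$ is a vector bundle with $\bigwedge^m W$ nef, then $\bigwedge^{m+1}W$ is nef.

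On the curve $C$ I would analyze the situation through the Harder--Narasimhan filtration of $W$. Let $G_1,\ldots,G_s$ be its graded pieces, semistable of strictly decreasing slopes $\mu_1>\cdots>\mu_s$ and ranks $r_1,\ldots,r_s$, and record these slopes with multiplicity as $\nu_1\ge\cdots\ge\nu_{\rk W}$. Applying the exterior-power filtration of \cite[Chapter~II, Exercise~5.16]{Har} (already used in Proposition~\ref{prop:fiber:target}), $\bigwedge^m W$ acquires a filtration whose graded pieces are the bundles $\bigotimes_j\bigwedge^{m_j}G_j$ with $\sum_j m_j=m$; over $\C$, exterior and tensor powers of semistable bundles are semistable, so each such piece is semistable of slope $\sum_j m_j\mu_j$. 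I claim $\bigwedge^m W$ is nef if and only if every one of these graded slopes is $\ge 0$: if they are all $\ge 0$, then each graded piece is nef (semistable of nonnegative slope on a curve), and $\bigwedge^m W$, being an iterated extension of nef bundles, is nef by \cite[Proposition~1.2(8)]{CP91}; conversely the minimal-slope graded piece occurs as the bottom step of the induced filtration, hence as a genuine quotient of $\bigwedge^m W$, so its slope is $\ge 0$ whenever $\bigwedge^m W$ is nef. This minimal graded slope is plainly $\nu_{\rk W-m+1}+\cdots+\nu_{\rk W}$, the sum of the $m$ smallest slopes.

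With this equivalence in hand the conclusion is immediate: nefness of $\bigwedge^m W$ says $\nu_{\rk W-m+1}+\cdots+\nu_{\rk W}\ge 0$, and since $\nu_{\rk W-m+1}$ is the largest of these $m$ summands we get $m\,\nu_{\rk W-m+1}\ge 0$, hence $\nu_{\rk W-m}\ge\nu_{\rk W-m+1}\ge 0$; adding the nonnegative slope $\nu_{\rk W-m}$ shows the sum of the $m+1$ smallest slopes is again $\ge 0$, which is exactly the nefness of $\bigwedge^{m+1}W$. The step I expect to be most delicate is the slope equivalence of the previous paragraph: it relies on the characteristic-zero semistability of tensor and exterior powers of semistable bundles, and on verifying that the most negative graded piece of $\bigwedge^m W$ really is realized as a quotient rather than merely a subquotient. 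Once that is granted, the reduction to $k=1$, the passage to curves, and the final chain of inequalities are all elementary.
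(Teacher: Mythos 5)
Your argument is correct, but note that the paper itself contains no proof of this statement: Theorem~\ref{them:Fuj} is quoted as an external result from \cite[Theorem~3.3]{LayNah05} and Fujita's unpublished note \cite{Fuj22}, so there is no in-paper argument to compare against. What you supply is a complete, self-contained proof along the standard lines one would expect: reduce to $k=1$, test nefness on pullbacks to smooth curves, and control everything by the Harder--Narasimhan slopes, using that in characteristic zero tensor and exterior products of semistable bundles are semistable. The payoff of your route is that it actually proves the sharper equivalence that $\bigwedge^m E$ is nef if and only if, for every $\gamma:C\to X$, the sum of the $m$ smallest HN slopes of $\gamma^{\ast}E$ is nonnegative, from which the monotonicity in $m$ is a one-line convexity observation. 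The only point you flag as delicate deserves one more sentence: writing the HN filtration as $0=W_0\subset W_1\subset\cdots\subset W_s=W$ with $G_s=W/W_{s-1}$ of minimal slope $\mu_s$ and rank $r_s$, the filtration of $\bigwedge^m W$ induced by $W_{s-1}\subset W$ has top quotients $\bigwedge^p W_{s-1}\otimes\bigwedge^{m-p}G_s$, and these vanish for $p<m-r_s$; hence the first nonvanishing graded piece $\bigwedge^{m-r_s}W_{s-1}\otimes\det G_s$ is an honest quotient of $\bigwedge^m W$, and iterating this on $\bigwedge^{m-r_s}W_{s-1}$ exhibits the minimal-slope graded piece as a genuine quotient, as you claimed (so it sits at the top, not the ``bottom,'' of the decreasing filtration). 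Alternatively one can bypass this verification entirely by invoking the standard identities $\mu_{\min}(A\otimes B)=\mu_{\min}(A)+\mu_{\min}(B)$ and $\mu_{\min}\bigl(\bigwedge^m W\bigr)=\nu_{\rk W-m+1}+\cdots+\nu_{\rk W}$, together with the fact that a bundle on a smooth curve is nef exactly when $\mu_{\min}\geq 0$. With that point made explicit, the proof is complete and correct, including the trivial handling of the range where $m+k>\rk E$.
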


\begin{remark}\label{rem:str:nef} In general, if a vector bundle $E$ is strictly nef, it is not necessarily that its exterior power $\bigwedge^rE$ is strictly nef. For instance, see \cite[Section~10~in~Chapter~I]{Hartshorne-ample} and \cite[Example~2.1]{LOY21}). This means that an analogue of Theorem~\ref{them:Fuj} does not hold if we replace nefness of $\bigwedge^m E$ by strictly nefness.
\end{remark}
\if0
\begin{theorem}\label{them:RC:Fano} 
Let $X$ be a smooth projective variety of dimension $n$. Assume that $X$ is rationally connected and $\bigwedge^{r} T_X$ is nef for some $1\leq r<n$. Then $X$ is a Fano variety. 
\end{theorem} 
\fi

\begin{proof}[{Proof of Proposition~\ref{prop:RC:Fano}}] Assume that $X$ is rationally connected and $\bigwedge^{r} T_X$ is nef for some $1\leq r<n$. Then Theorem~ \ref{them:Fuj} implies that $\bigwedge^{n-1} T_X$ is nef. Applying Proposition~\ref{prop:fiber:target}, we see that $X$ is a Fano variety.  
\end{proof}

\begin{proof}[{Proof of Theorem~\ref{them:str}}]  By Proposition~\ref{prop:fundamental:properties}~(i), $-K_X$ is nef; according to Theorem~\ref{them:CH19}, this turns out that there exists a finite \'etale cover $X'\to X$ such that $X'\cong Y\times Z$ where $K_Y$ is trivial and $Z$ is a locally trivial fibration $Z \to {\rm Alb}(Z)$ with a rationally connected fiber. Since we have $X'\to X$ is \'etale, $\bigwedge^{r} T_{X'}$ is also nef; then by Theorem~\ref{them:Fuj}, $\bigwedge^{n-1} T_{X'}$ is also nef. Let $p_1: X'\to Y$ (resp. $p_2: X'\to Z$) be the first projection (resp. the second projection). We denote by $\ell$ the dimension of $Y$. Since we have 
$$
\bigwedge^{n-1} T_{X'}\cong \left[p_1^{\ast}\left(\bigwedge^{\ell}T_Y \right)\bigotimes p_2^{\ast}\left(\bigwedge^{n-\ell-1}T_Z \right)\right]\bigoplus \left[p_1^{\ast}\left(\bigwedge^{\ell-1}T_Y \right)\bigotimes p_2^{\ast}\left(\bigwedge^{n-\ell}T_Z \right)\right],
$$
the direct summand $p_1^{\ast}\left(\bigwedge^{\ell-1}T_Y \right)\bigotimes p_2^{\ast}\left(\bigwedge^{n-\ell}T_Z \right)$ is nef; restricting this bundle to a fiber of the projection $p_2$, we see that $\bigwedge^{\ell-1}T_Y$ is also nef provided that $\ell >0$. 
If $\ell=1$, then $Y$ is an elliptic curve. Furthermore if $\ell>1$, then Proposition~\ref{prop:fundamental:properties}~(ii) implies that $Y$ is a finite \'etale quotient of an Abelian variety $\tilde{Y}$. Hence, in any case, there exists a finite \'etale cover $\tilde{X}\to X'$ such that $\tilde{X}$ is a locally trivial fibration $\varphi: \tilde{X} \to A$ onto an Abelian variety $A$ with a rationally connected fiber. Then our assertion follows from Proposition~\ref{prop:fiber:target} and Proposition~\ref{prop:RC:Fano}.
\end{proof}

\section*{Acknowledgements} On August the 1st 2022, C\'ecile Gachet gave a talk on the result of \cite{Gac22} in Algebraic Geometry seminar of the University of Tokyo; the author knew her excellent result \cite[Theorem~1.2]{Gac22} at the time. The author would like to thank C\'ecile Gachet for sending him her preprint \cite{Gac22} and answering his questions. The author would like to thank Kento Fujita for sending him his private note \cite{Fuj22}, which states that for a vector bundle $E$ of rank $r$ if $\bigwedge^{\ell} E$ is nef, then so is $\bigwedge^m E$ for any $\ell\leq m\leq r$. On the other hand, while preparing this paper, the author noticed the result of \cite{Fuj22} was contained in \cite{LayNah05}.
 
 
\bibliographystyle{plain}
\bibliography{biblio}
\end{document}